\def\bN {\mathbf{N}}
\def\bR {\mathbf{R}}
\def\fH {\mathfrak{H}}
\def\cC {\mathcal{C}}
\def\cD {\mathcal{D}}
\def\cE {\mathcal{E}}
\def\cH {\mathcal{H}}
\def\cL {\mathcal{L}}
\def\de {{\delta}}
\def\eps {{\epsilon}}
\def\l {{\lambda}}
\def\Om {{\Omega}}
\def\d {{\partial}}
\def\grad {{\nabla}}
\def\Dlt {{\Delta}}
\def\rstr {{\big |}}
\def\indc {{\bf 1}}
\def\la {\langle}
\def\ra {\rangle}
\def \La {\bigg\langle}
\def \Ra {\bigg\rangle}
\newcommand{\Supp}{\operatorname{supp}}
\newcommand{\Tr}{\operatorname{trace}}
\newcommand{\Lip}{\operatorname{Lip}}
\def\OP {\mathrm{OP}}
\newcommand{\MKd}{\operatorname{dist_{MK,2}}}
\def\hb {{\hbar}}
\newcommand{\ba}{\begin{aligned}}
\newcommand{\ea}{\end{aligned}}
\newcommand{\be}{\begin{equation}}
\newcommand{\ee}{\end{equation}}
\newcommand{\lb}{\label}
\newcommand{\nn}{\nonumber}
\newcommand{\hus}[1]{\widetilde W_{\hbar}[#1]}
\renewcommand{\tilde}{\widetilde}
\newtheorem{Thm}{Theorem}[section]
\newtheorem{Prop}[Thm]{Proposition}
\newtheorem{Cor}[Thm]{Corollary}
\newtheorem{Lem}[Thm]{Lemma}
\begin{document}

\title[Observability for Schr\"odinger]{Observability for the Schr\"odinger Equation:\\ an Optimal Transportation Approach}

\author[F. Golse]{Fran\c cois Golse}
\address[F.G.]{CMLS, \'Ecole polytechnique, CNRS, Universit\'e Paris-Saclay , 91128 Palaiseau Cedex, France}
\email{francois.golse@polytechnique.edu}

\author[T. Paul]{Thierry Paul}
\address[T.P.]{Laboratoire J.-L. Lions, Sorbonne Universit\'e \& CNRS, bo\^\i te courrier 187, 75252 Paris Cedex 05, France}
\email{thierry.paul@upmc.fr}

\begin{abstract}
We establish an observation inequality for the Schr\"odinger equation on $\bR^d$, uniform in the Planck constant $\hbar\in[0,1]$. The proof   is based on the pseudometric introduced in [F. Golse, T. Paul, Arch. Rational Mech. Anal. \textbf{223} (2017), 57--94].
This inequality involves only effective constants which are computed explicitly in their dependence in  $\hbar$ and all parameters involved.
\end{abstract}

\maketitle

\section{Observation inequality for the Schr\"odinger equation}


Consider the Schr\"odinger equation where the (real-valued) potential $V$ belongs to $C^{1,1}(\bR^d)$ is such that the quantum Hamiltonian
\be\label{schro}\nn
-\tfrac12\hbar^2\Dlt_y+V(y)
\ee 
has a self-adjoint extension on $\fH:=L^2(\bR^d)$:
\be\lb{Schro}
i\hbar\d_t\psi(t,y)=\left(-\tfrac12\hbar^2\Dlt_y+V(y)\right)\psi(t,y)\,,\qquad\psi\rstr_{t=0}=\psi^{in}\,.
\ee
In the equation above, $\hbar>0$ the reduced Planck constant, and the particle mass is set to $1$.

An observation inequality for the Schr\"odinger equation \eqref{Schro} is an inequality of the form
\be\label{ineqobs}
\|\psi^{in}\|_{\fH}^2\le C
\int_0^T\int_\Om|\psi(t,x)|^2dxdt\,,
\ee
for some $T>0$, where $\Om$ is an open subset of $\bR^d$, and $C\equiv C[T,\Om]$ is a positive constant, which holds for some appropriate class of initial data $\psi^{in}$   (see equation
(2) in \cite{Laurent}).

\noindent Note that the r.h.s. of \eqref{ineqobs} is smaller than $CT$, so that \eqref{ineqobs} can be satisfied only when $CT\geq 1$. Moreover, it is easy to check that the case $CT=1$ is possible only when $\Omega=\bR^d$, and reduces that way to a tautology. 

Therefore we will suppose in the sequel
$$
CT>1.
$$

We will say that    a compact subset $K$ of $\bR^d\times\bR^d$,  an open set $\Om$ of $\bR^d$ and  $T>0$ satisfy the ``(\`a la) Bardos-Lebeau-Rauch geometric condition'' \cite{BLR}  if:
$$
\hbox{for each }(x,\xi)\in K\,,\hbox{ there exists }t\in(0,T)\hbox{ s.t. }X(t;x,\xi)\in\Om\,.\leqno{\hbox{(GC)}}
$$

\vskip 1cm
Let us recall the definition of the Schr\"odinger coherent state:
$$
|q,p\ra(x):=(\pi\hbar)^{-d/4}e^{-|x-q|^2/2\hbar}e^{ip\cdot(x-q/2)/\hbar}\,
$$
providing a decomposition of the identity  on $\fH$ (in a weak sense)
\be\label{decomp}
\int_{\bR^{2d}}|q,p\ra\la q,p|\tfrac{dpdq}{(2\pi\hbar)^d}=I_\fH.
\ee
Let us recall also,  for any self-adjoint operator $A$ on $ L^2(\bR^d)$ and any $\psi\in L^2(\bR^d)$, 
the definition of the standard deviation of $A$ in the state $\psi$, $\Delta_A(\psi)\in[0,+\infty]$:
\be\label{defdev}\nn
\Delta_A(\psi)=\sqrt{(\psi,A^2\psi)_{L^2(\bR^d)}-(\psi,A\psi))_{L^2(\bR^d)}^2}
\ee
We define 
\be\label{defdelta}
\Delta(\psi):=
\sqrt{\sum_{j=1}^d(\Delta^2_{x_j}(\psi)+\Delta^2_{-i\hbar\partial_{x_j}}(\psi))}.
\ee
Let us remark that, by the Heisenberg inequalities, for any $\psi\in\fH$,
\be\label{deltaplus}
\Delta(\psi)^2\geq d\hbar.
\ee
and, for any $(p,q)\in\bR^{2d}$,
\be\label{heiscoh}
\Delta(|p,q\rangle)=\sqrt{d\hbar}.
\ee
\vskip 1cm
\begin{Thm}\label{main}
Assume that $V$ belongs to $C^{1,1}(\bR^d)$ and that $V^-\in L^{d/2}(\bR^d)$.

Let $T>0$, $\Omega$ be an open subset of $\bR^d$. 
and $K$ be a compact set in $\bR^{2d}$ satisfying the Bardos-Lebeau-Rauch condition  $(GC)$.

 Moreover, let $\delta>0$ and 
$$
\Omega_\delta:=\{x\in\bR^d|\mathrm{dist}(x,\Omega)<\delta\}.
$$

Then the Schr\"odinger equation \eqref{Schro} satisfies an observability property on $[0,T]\times \Omega_\delta$ of the form \eqref{ineqobs} with constant $C
$ for all vectors $\psi\in\fH$ satisfying
$$
C[T,K,\Omega
]\left(\int_
K|\langle\psi|p,q\rangle|^2\tfrac{dpdq}{(2\pi\hbar)^d}\right)-D[T,\Lip(\nabla V)]\frac{\Delta(\psi)}\delta\geq \frac1{C}
$$
where
\begin{eqnarray}
C[T,K,\Omega
]&=&
\inf_{(x,\xi)\in K}\int_0^T\indc_{\Om
}(X(t;x,\xi))dt
\nn\\
D[T,\Lip(\nabla V)]&=&\frac{e^{(1+\Lip(\nabla V)^2)T/2}-1}{\mbox{\small $1+\Lip(\nabla V)^2$}}.
\nn
\end{eqnarray}
Moreover, the observation inequality will be satisfied for a non empty set of vectors as soon as $\delta$ satisfies the following, non sharp, bound:
$$
\delta\geq \frac{D[T,\Lip(\nabla V)]}{C[T,K,\Omega](1-e^{-\frac{d_K^2}{4\hbar}}/(4\pi)^d)+C^{-1}}\sqrt{d\hbar},
$$
where $d_K$ is the diameter of $K$.
\end{Thm}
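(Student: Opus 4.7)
The bridge between the quantum state $\psi(t)$ and the classical Hamiltonian flow $X(t)$ is the Husimi density $H_\psi(q,p):=|\langle q,p|\psi\rangle|^2/(2\pi\hbar)^d$, a probability density on $\bR^{2d}$ by \eqref{decomp}. I would aim for the pointwise lower bound
\[
\int_0^T\!\!\int_{\Om_\delta}|\psi(t,x)|^2\,dx\,dt\ \ge\ C[T,K,\Om]\int_K H_\psi(q,p)\,dq\,dp\ -\ D[T,\Lip(\grad V)]\,\frac{\Delta(\psi)}{\delta},
\]
which, combined with the stated hypothesis (and $\|\psi\|_\fH=1$ without loss of generality), immediately yields \eqref{ineqobs}. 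The proof rests on two pillars: (GC) produces a classical lower bound involving the push-forward $X(t)_\# H_\psi$, and the pseudometric of the cited Golse--Paul paper transfers it to a quantum lower bound on $\int_{\Om_\delta}|\psi(t,x)|^2\,dx$.

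\textbf{Classical step and quantum transfer.} Fix a $(1/\delta)$-Lipschitz cutoff $\chi_\delta:\bR^d\to[0,1]$ with $\chi_\delta|_\Om\equiv 1$ and $\Supp\chi_\delta\subset\Om_\delta$. For every $(q,p)\in K$, (GC) gives
\[
C[T,K,\Om]\le\int_0^T\indc_\Om(X(t;q,p))\,dt\le\int_0^T\chi_\delta(X(t;q,p))\,dt;
\]
integrating against $H_\psi$ over $K$ and pushing forward yields
\[
C[T,K,\Om]\int_K H_\psi\ \le\ \int_0^T\!\!\int\chi_\delta\,d\!\left(X(t)_\# H_\psi\right)dt.
\]
The pseudometric $\MKd$ of the cited reference now enters through two properties: a Gronwall propagation bound $\MKd^2(\psi(t),X(t)_\# H_\psi)\le e^{(1+\Lip(\grad V)^2)t}\MKd^2(\psi,H_\psi)$, together with the duality $|\int\chi_\delta\,d(\mu-\nu)|\le\frac{1}{\delta}\MKd(\mu,\nu)$ against $(1/\delta)$-Lipschitz test functions. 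The initial-time bound $\MKd(\psi,H_\psi)\le\Delta(\psi)$ comes from the natural operator-valued coupling whose cost is the quantum variance of $(x-q,-i\hbar\grad-p)$ averaged against $H_\psi$. Integrating the Gronwall factor in $t$ produces $D[T,\Lip(\grad V)]$, and the leakage between $\int\chi_\delta\,dH_{\psi(t)}$ and $\int_{\Om_\delta}|\psi(t,x)|^2\,dx$ (governed by a Gaussian convolution on the position marginal) is exponentially small in $\delta^2/\hbar$ and hence absorbed by $D\Delta/\delta$, because $\Delta^2\ge d\hbar$ by \eqref{deltaplus}.

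\textbf{Non-emptiness and main obstacle.} For non-emptiness I would plug in a coherent state $\psi=|q_0,p_0\rangle$ with $(q_0,p_0)$ centered in $K$: by \eqref{heiscoh}, $\Delta(\psi)=\sqrt{d\hbar}$, and the Gaussian overlap computation gives $\int_K H_\psi\ge 1-e^{-d_K^2/(4\hbar)}/(4\pi)^d$; substituting into the standing hypothesis and solving the resulting linear inequality in $\delta$ then yields the displayed non-sharp bound. The most delicate ingredient is the initial-time estimate $\MKd(\psi,H_\psi)\le\Delta(\psi)$: since the Husimi measure itself carries an intrinsic $d\hbar$ of variance on top of $\psi$, the operator-valued coupling must be chosen so that the Golse--Paul cost reduces exactly to $\Delta(\psi)^2$ without an additive remainder---this is where a naive classical coupling, or the diagonal coupling $|q,p\rangle\langle q,p|\,H_\psi(q,p)$, would fail.
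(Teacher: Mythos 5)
Your proposal follows essentially the same route as the paper: the Husimi density of $\psi^{in}$ as the classical reference measure, the cutoff $\chi_\de(x)=(1-\hbox{dist}(x,\Om)/\de)_+$ with $\Lip(\chi_\de)=1/\de$, the Gronwall propagation of the quantum--classical pseudometric $E_{\hbar,\l}$ (Theorem \ref{T-IneqFGTP}), its duality against Lipschitz observables, the initial-time bound $E_{\hbar,1}(\hus{R^{in}},R^{in})\le 2\Delta(\psi^{in})$ (Proposition 9.1 of \cite{GPsemic} --- which carries a factor $2$ rather than your $1$, a harmless discrepancy given the paper's own loose constant bookkeeping between Theorem \ref{main} and Corollary \ref{C-Obs}), and a coherent state centered in $K$ for non-emptiness. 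The only structural difference is that your ``leakage'' step through the position marginal of $H_{\psi(t)}$ is unnecessary: the paper's duality estimate compares $\Tr(\chi_\de R(t))=\int\chi_\de(x)|\psi(t,x)|^2\,dx\le\int_{\Om_\de}|\psi(t,x)|^2\,dx$ directly with the classical integral $\iint\chi_\de(x)f(t,x,\xi)\,dxd\xi$ via the operator-valued coupling, so no Gaussian-tail error term ever appears and nothing needs to be ``absorbed''.
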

\vskip 1cm

The first part of Theorem \ref{main} is exactly the second part (pure state case) of Corollary \ref{C-Obs} of Theorem \ref{T-Obs} in Section \ref{observmet} below.

Controllability of the quantum dynamics has a long history in mathematics and mathematical physics. Giving an exhaustive bibliography on the subject is by far beyond the scope of the present paper, paper, but the reader can consult the survey article
\cite{Laurent} and the literature cited there, together with the important earlier references
 \cite{fabre1,fabre2}, \cite{lebeau1}

For the bound on $\delta$, we first remark that the quantity
$$
E[\psi,\delta]:=C[T,K,\Omega^\delta]\left(\int_
K|\langle\psi|p,q\rangle|^2\tfrac{dpdq}{(2\pi\hbar)^d}\right)-D[T,\Lip(\nabla V)]\frac{\Delta(\psi)}\delta,
$$ 
needed to be strictly positive for the observability condition to hold true, is a difference between (a quantity proportional to)
$\int_
K|\langle\psi|p,q\rangle|^2\tfrac{dpdq}{(2\pi\hbar)^d}$ ($\leq 1$ by \eqref{decomp})\footnote{Note that $\int_{\bR^{2d}}K|\langle\psi|p,q\rangle|^2\tfrac{dpdq}{(2\pi\hbar)^d}$ is the integral over $K$ of the Husimi function of $\psi$.}, which evaluates the microlocalization of $\psi$ on $K$, and (a quantity proportional to) $\Delta(\psi)$ ($\geq \sqrt{d\hbar}$ (by \eqref{deltaplus}), which measures the spreading of $\psi$ near its average position in phase-space.

\noindent However, this competition is balanced by the 
 smallness of $D[T,\Lip(\nabla V)]\frac{\Delta(\psi)}\delta$ for large values of $\delta$, namely $E[\delta,\psi]\geq \frac1C$ when
 $$
\delta\geq \frac{D[T,\Lip(\nabla V)]\Delta(\psi)}{C[T,K,\Omega](1-
 \int_
K|\langle \psi|p,q\rangle|^2\tfrac{dpdq}{(2\pi\hbar)^d}
)+C^{-1}}.
 $$

\noindent Finally, we remark that, taking $\psi=|p_0,q_0\rangle$ for some $(p_0,q_0)\in\bR^{2d}$ we have, by \eqref{heiscoh}, 
$$
\Delta(\psi)=\sqrt{d\hbar},
$$ 
and, when $(p_0,q_0)$ belongs to the interior of $ K$, 
$$
 \int_
K|\langle p_0,q_0|p,q\rangle|^2\tfrac{dpdq}{(2\pi\hbar)^d}=
1-\int_{\bR^{2d}\backslash K}e^{-\frac{|p_0-p|^2+|q_0-q|^2}\hb}
\tfrac{dpdq}{(2\pi\hbar)^d}
\geq 1-\tfrac{e^{-\frac{\mathrm{dist}{\left((p_0,q_0),\bR^{2d}\backslash K\right)}^2}\hb}}{(4\pi)^{d}}.
$$
We conclude by picking  $(p_0,q_0)$ such that, for example, $\mathrm{dist}{\left((p_0,q_0),\bR^{2d}\backslash K\right)}\geq \frac{d_K}2$.

\vskip 1cm

In the present paper, we will be working with the slightly more general Heisenberg equation
\be\lb{Heisen}
i\hbar\d_tR(t)=\left[-\tfrac12\hbar^2\Dlt_y+V(y),R(t)\right]\,,\ R\rstr_{t=0}=R^{in}\geq 0,\ \Tr R=1,\
\ee
equivalent to the Schr\"odinger equation, modulo a global phase of the wave function, through the passage
$$
\psi\in\fH\ \longrightarrow |\psi\rangle\langle\psi|,
$$
and whose underlying classical dynamics solves  the Liouville equation
$$
\d_tf(t,x,\xi)+\{\tfrac12|\xi|^2+V(x),f(t,x,\xi)\}=0\,,\qquad f\rstr_{t=0}=f^{in}\,,
$$
where $f^{in}$ is a probability density on $\bR^d\times\bR^d$ having finite second moments.

Corollary \ref{C-Obs}  contains also an equivalent statement for initial conditions  which are T\"oplitz operators. The general case of mixed states  can be recovered by the inequality \eqref{obsgen} inside the proof of  Theorem \ref{T-Obs}.

The core of the paper is  Theorem \ref{T-Obs} in Section \ref{observmet}, whose proof needs the introduction in Section {pseudomet} of a class of pseudometrics adapted to the Heisenberg equation \eqref{Heisen}, introduced in \cite{FGTPaul} after \cite{FGMouPaul}, and whose evolution under  \eqref{Heisen} is presented in Section \ref{evolmet}.

\section{A pseudometric for comparing classical and quantum densities}\label{pseudomet}


This section elaborates on \cite{FGTPaul}, with some marginal improvements.

A density operator on $\fH$ is an operator $R\in\cL(\fH)$ such that
$$
R=R^*\ge 0\,,\quad\Tr(R)=1\,.
$$
The set of all density operators on $\fH$ will be denoted by $\cD(\fH)$. We denote by $\cD^2(\fH)$ the set of density operators on $\fH$ such that
\be\lb{FiniteEnerg}
\Tr(R^{1/2}(-\hbar^2\Dlt_y+|y|^2)R^{1/2})<\infty\,.
\ee
If $R\in\cD^2(\fH)$, one has
\be\lb{FiniteEnerg2}
\Tr((-\hbar^2\Dlt_y+|y|^2)^{1/2}R(-\hbar^2\Dlt_y+|y|^2)^{1/2})=\Tr(R^{1/2}(-\hbar^2\Dlt_y+|y|^2)R^{1/2})<\infty
\ee
as can be seen from the lemma below (applied to $A=\l^2|y|^2-\hbar^2\Dlt_y$ and $T=R$).

\begin{Lem}\lb{L-FiniteEnerg}
Let $T\in\cL(\fH)$ satisfy $T=T^*\ge 0$, and let $A$ be an unbounded operator on $\fH$ such that $A=A^*\ge 0$. Then 
$$
\Tr(T^{1/2}AT^{1/2})=\Tr(A^{1/2}RA^{1/2})\in[0,+\infty]\,.
$$
\end{Lem}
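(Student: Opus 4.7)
The key fact is the basis-independent identity $\Tr(X^*X)=\Tr(XX^*)\in[0,+\infty]$ for any bounded operator $X$ on $\fH$. Applied formally to $X=A^{1/2}T^{1/2}$ this would give the claim at once, but since $A$ is unbounded the factors $A^{1/2}T^{1/2}$ and $T^{1/2}A^{1/2}$ need not be bounded, nor even densely defined in a useful way. The plan is therefore to reduce to the bounded case by spectrally truncating $A$, and then to pass to the limit by monotone convergence.

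Let $(E_\l)_{\l\ge 0}$ denote the spectral resolution of $A$ and, for $n\in\bN$, set $P_n:=E_n$ and $A_n:=AP_n=\int_0^n\l\,dE_\l\in\cL(\fH)$. Then $A_n=A_n^*\ge 0$ is bounded, commutes with $P_n$, and by functional calculus $A_n^{1/2}=A^{1/2}P_n\in\cL(\fH)$. The bounded operator $B_n:=A_n^{1/2}T^{1/2}\in\cL(\fH)$ has adjoint $B_n^*=T^{1/2}A_n^{1/2}$, and a direct computation gives $B_n^*B_n=T^{1/2}A_nT^{1/2}$ and $B_nB_n^*=A_n^{1/2}TA_n^{1/2}$, hence
$$
\Tr(T^{1/2}A_nT^{1/2})=\|B_n\|_{\mathrm{HS}}^2=\Tr(A_n^{1/2}TA_n^{1/2})\in[0,+\infty],
$$
both sides being independent of the choice of orthonormal basis.

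Second, I would pass to the limit $n\to\infty$. By spectral calculus, for every $v\in\fH$,
$$
\la v,A_nv\ra=\int_0^n\l\,d\la v,E_\l v\ra\;\uparrow\;\int_0^\infty\l\,d\la v,E_\l v\ra=\la v,Av\ra\in[0,+\infty],
$$
the limit being finite precisely when $v\in\Dom(A^{1/2})$. Applying this with $v=T^{1/2}e_j$ (respectively $v=e_j$) for some orthonormal basis $(e_j)_{j\ge 1}$ of $\fH$ and using Tonelli on the double sum in $n$ and $j$, one obtains
$$
\sup_n\Tr(T^{1/2}A_nT^{1/2})=\sum_{j\ge 1}\la T^{1/2}e_j,AT^{1/2}e_j\ra=:\Tr(T^{1/2}AT^{1/2}),
$$
and symmetrically $\sup_n\Tr(A_n^{1/2}TA_n^{1/2})=\Tr(A^{1/2}TA^{1/2})$ in $[0,+\infty]$. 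Combined with the bounded-case identity, this yields the conclusion.

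The only delicate point, rather than a real obstacle, is to fix the meaning of the two traces appearing in the statement when they are infinite: both must be interpreted as the monotone limits above (equivalently, as the squared Hilbert--Schmidt norm of the closure of $A^{1/2}T^{1/2}$), so that the asserted equality is one between well-defined elements of $[0,+\infty]$ that do not depend on the choice of basis used to compute them.
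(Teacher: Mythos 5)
Your overall strategy (truncate $A$ spectrally, apply the bounded\-/operator identity $\Tr(X^*X)=\Tr(XX^*)$ to $X=A_n^{1/2}T^{1/2}$, and pass to the limit) is sound and genuinely different from the paper's proof, which avoids truncation altogether: there one argues by cases, noting that if $\Tr(T^{1/2}AT^{1/2})<\infty$ then $A^{1/2}T^{1/2}\in\cL^2(\fH)$ and the Hilbert--Schmidt identity (cited from Kato) applies directly, while if it is infinite a two-line contradiction (otherwise $T^{1/2}A^{1/2}$ and its adjoint would be Hilbert--Schmidt, making $T^{1/2}AT^{1/2}$ trace class) forces the other trace to be infinite as well. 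Your treatment of the first limit is correct: $\langle T^{1/2}e_j,A_nT^{1/2}e_j\rangle=\int_{[0,n]}\lambda\,d\langle T^{1/2}e_j,E_\lambda T^{1/2}e_j\rangle$ genuinely increases to the quadratic-form value, and Tonelli applies.

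The gap is in the word \emph{symmetrically}. One has
$$
\Tr(A_n^{1/2}TA_n^{1/2})=\sum_j\|T^{1/2}A_n^{1/2}e_j\|^2=\sum_j\langle A^{1/2}P_ne_j,\,T\,A^{1/2}P_ne_j\rangle\,,
$$
and these terms are \emph{not} monotone in $n$: the operator $T$ sits between the two truncations, and the form $v\mapsto\langle v,Tv\rangle$ need not increase along $v_n=A^{1/2}P_ne_j$ (the cross terms $2\,\mathrm{Re}\,\langle v_n,T(v_{n+1}-v_n)\rangle$ can be negative). The displayed spectral convergence with ``$v=e_j$'' computes $\Tr(A_n)$, not $\Tr(A_n^{1/2}TA_n^{1/2})$, so neither it nor Tonelli applies on that side; the identification $\sup_n\Tr(A_n^{1/2}TA_n^{1/2})=\Tr(A^{1/2}TA^{1/2})$ is therefore unproved, and this is precisely where the definitional subtlety you flag at the end bites, since $T^{1/2}A^{1/2}$ is only densely defined. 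The step can be repaired: evaluate $\Tr(A_n^{1/2}TA_n^{1/2})$ in an orthonormal basis $(f_j)\subset\Dom(A^{1/2})$ adapted to the truncations, obtained by concatenating orthonormal bases of $\mathrm{Ran}(P_1)$, $\mathrm{Ran}(P_2)\ominus\mathrm{Ran}(P_1)$, etc. For such a basis $P_nf_j$ equals $0$ for $n<n_j$ and $f_j$ for $n\ge n_j$, so each term $\|T^{1/2}A^{1/2}P_nf_j\|^2$ is $0$ and then constant, hence monotone, giving $\sup_n\Tr(A_n^{1/2}TA_n^{1/2})=\sum_j\|T^{1/2}A^{1/2}f_j\|^2$, the natural meaning of $\Tr(A^{1/2}TA^{1/2})$; independence of the choice of such a basis then follows a posteriori from the equality with $\Tr(T^{1/2}AT^{1/2})$, which you have already expressed as a supremum of bona fide traces of bounded operators.
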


\begin{proof}
The definition of $T^{1/2}$ and $A^{1/2}$ can be found in Theorem 3.35 in chapter V, \S 3 of \cite{Kato}, together with the fact that $A^{1/2}$ and $T^{1/2}$ are self-adjoint. 

If $\Tr(T^{1/2}AT^{1/2})<\infty$, then $A^{1/2}T^{1/2}\in\cL^2(\fH)$ and the equality holds by formula (1.26) in chapter X, \S 1 of \cite{Kato}. If $\Tr(T^{1/2}AT^{1/2})=\infty$, then $\Tr(A^{1/2}TA^{1/2})=+\infty$, for otherwise $T^{1/2}A^{1/2}$
and its adjoint $A^{1/2}T^{1/2}$ would belong to $\cL^2(\fH)$, so that $T^{1/2}AT^{1/2}\in\cL^1(\fH)$, which would be in contradiction with the assumption that $\Tr(T^{1/2}AT^{1/2})=\infty$.
\end{proof}

Let $f\equiv f(x,\xi)$ be a probability density on $\bR^d\times\bR^d$ such that
\be\lb{2ndMom}
\iint_{\bR^d\times\bR^d}(|x|^2+|\xi|^2)f(x,\xi)dxd\xi<\infty\,.
\ee

A coupling of $f$ and $R$ is a measurable operator-valued function $(x,\xi)\mapsto Q(x,\xi)$ such that, for a.e. $(x,\xi)\in\bR^d\times\bR^d$,
$$
Q(x,\xi)=Q(x,\xi)^*\ge 0\,,\quad\Tr(Q(x,\xi))=f(x,\xi)\,,\quad\iint_{\bR^d\times\bR^d}Q(x,\xi)dxd\xi=R\,.
$$
The second condition above implies that $Q(x,\xi)\in\cL^1(\fH)$ for a.e. $(x,\xi)\in\bR^d\times\bR^d$. Since $\cL^1(\fH)$ is separable, the notion of strong and weak measurability are equivalent for $Q$. The set of couplings of $f$ and $R$ is
denoted by $\cC(f,R)$. Notice that the function $(x,\xi)\mapsto f(x,\xi)R$ belongs to $\cC(f,R)$.

In \cite{FGTPaul}, one considers the following ``pseudometric'': for each probability density $f$ on $\bR^d\times\bR^d$ and each $R\in\cD^2(\fH)$,
$$
E_{\hbar,\l}(f,R):=\inf_{Q\in\cC(f,R)}\left(\iint_{\bR^d\times\bR^d}\Tr_\fH(Q(x,\xi)^{1/2}c_\l(x,\xi,y,\hbar D_y)Q(x,\xi)^{1/2})dxd\xi\right)^{1/2}
$$
where the quantum transportation cost is the quadratic differential operator in $y$, parametrized by $(x,\xi)\in\bR^d\times\bR^d$:
$$
c_\l(x,\xi,y,\hbar D_y):=\l^2|x-y|^2+|\xi-\hbar D_y|^2\,,\quad D_y:=-i\grad_y\,.
$$

\begin{Lem}
If $R\in\cD^2(\fH)$ while $f$ is a probability density on $\bR^d\times\bR^d$ with finite second moment \eqref{2ndMom}, one has
$$
\ba
\iint_{\bR^d\times\bR^d}\Tr_\fH(Q(x,\xi)^{1/2}c(x,\xi,y,\hbar D_y)Q(x,\xi)^{1/2})dxd\xi
\\
=\iint_{\bR^d\times\bR^d}\Tr_\fH(c(x,\xi,y,\hbar D_y)^{1/2}Q(x,\xi)c(x,\xi,y,\hbar D_y)^{1/2})dxd\xi
\\
\le2\iint_{\bR^d\times\bR^d}(\l^2|x|^2+|\xi|^2)f(x,\xi)dxd\xi+2\Tr(R^{1/2}(-\hbar^2\Dlt_y+\l^2|y|^2)R^{1/2})&<\infty\,
\ea
$$
for each $Q\in\cC(f,R)$.
\end{Lem}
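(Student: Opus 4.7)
The plan rests on three observations: the equality of the two trace expressions is the pointwise content of Lemma \ref{L-FiniteEnerg}, a simple operator-level bound splits $c_\l$ into an $(x,\xi)$-part plus a $y$-part, and the marginal identities $\Tr_\fH(Q(x,\xi))=f(x,\xi)$ and $\iint Q(x,\xi)\,dxd\xi=R$ then reduce the $(x,\xi)$-integration to quantities controlled by \eqref{2ndMom} and \eqref{FiniteEnerg}.

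First I would fix $(x,\xi)$ and observe that $c_\l(x,\xi,y,\hbar D_y)=\l^2|x-y|^2+|\xi-\hbar D_y|^2$ is non-negative and essentially self-adjoint on $\cS(\bR^d)$ (it is unitarily equivalent, through a Weyl shift, to a scaled harmonic oscillator), while $Q(x,\xi)$ is non-negative and trace class. Applying Lemma \ref{L-FiniteEnerg} with $T=Q(x,\xi)$ and $A=c_\l(x,\xi,y,\hbar D_y)$ yields the pointwise equality of the two integrands (valid even when both sides equal $+\infty$), and integrating in $(x,\xi)$ produces the first claimed identity.

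To bound the common value, I would establish the operator inequality
\[ c_\l(x,\xi,y,\hbar D_y)\le 2(\l^2|x|^2+|\xi|^2)\,\Id_\fH+2(-\hbar^2\Dlt_y+\l^2|y|^2) \]
as quadratic forms on $\cS(\bR^d)$, by summing the componentwise estimates $(x_j-y_j)^2\le 2x_j^2+2y_j^2$ and $(\xi_j\Id-\hbar D_{y_j})^2\le 2\xi_j^2\Id+2\hbar^2 D_{y_j}^2$, each being a consequence of $(a\Id+B)^2\ge 0$ for $a\in\bR$ and $B=B^*$. Since $Q(x,\xi)\ge 0$, the map $B\mapsto\Tr_\fH(Q(x,\xi)^{1/2}BQ(x,\xi)^{1/2})$ is positivity preserving, so combined with $\Tr_\fH(Q(x,\xi))=f(x,\xi)$ this majorizes the pointwise integrand by
\[ 2(\l^2|x|^2+|\xi|^2)f(x,\xi)+2\Tr_\fH\bigl(Q(x,\xi)^{1/2}A_\l Q(x,\xi)^{1/2}\bigr),\qquad A_\l:=-\hbar^2\Dlt_y+\l^2|y|^2. \]
Integrating over $\bR^{2d}$, the first contribution is $2\iint(\l^2|x|^2+|\xi|^2)f\,dxd\xi$, finite by \eqref{2ndMom}. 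For the second, a pointwise reuse of Lemma \ref{L-FiniteEnerg} rewrites the integrand as $\Tr_\fH(A_\l^{1/2}Q(x,\xi)A_\l^{1/2})$; Fubini--Tonelli (legitimate because the integrand is non-negative) together with the marginal $\iint Q(x,\xi)\,dxd\xi=R$ then yields $\Tr_\fH(A_\l^{1/2}RA_\l^{1/2})$, which equals $\Tr_\fH(R^{1/2}A_\l R^{1/2})$ by a final invocation of Lemma \ref{L-FiniteEnerg} and is finite by \eqref{FiniteEnerg}.

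The delicate point, which I expect to require the most care, is the exchange of the $(x,\xi)$-integration with the unbounded operator $A_\l^{1/2}$, namely
\[ \iint_{\bR^{2d}}\Tr_\fH\bigl(A_\l^{1/2}Q(x,\xi)A_\l^{1/2}\bigr)\,dxd\xi=\Tr_\fH\Bigl(A_\l^{1/2}\Bigl(\iint_{\bR^{2d}}Q(x,\xi)\,dxd\xi\Bigr)A_\l^{1/2}\Bigr). \]
I would justify this by truncating $A_\l$ through its spectral cut-off $A_\l^{(N)}:=A_\l\,\mathbf{1}_{[0,N]}(A_\l)$, a bounded positive operator for which the exchange is immediate from the weak integrability of $Q$ and cyclicity of the trace, and then letting $N\to+\infty$ via monotone convergence on both sides (all integrands being non-negative).
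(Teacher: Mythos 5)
Your proposal is correct and follows essentially the same route as the paper: the same operator inequality splitting $c_\l$ into an $(x,\xi)$-part and a harmonic-oscillator part, the marginal conditions on $Q$, Lemma \ref{L-FiniteEnerg} for the trace-swapping identities, and monotone convergence along a spectral decomposition of $-\hbar^2\Dlt_y+\l^2|y|^2$ to exchange the $(x,\xi)$-integration with the unbounded factor. The only cosmetic difference is that you invoke Lemma \ref{L-FiniteEnerg} for the equality of the two integrands at the outset (legitimate, since that equality holds in $[0,+\infty]$) whereas the paper first establishes finiteness and applies it at the end.
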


\begin{proof}
Notice that
$$
c_\l(x,\xi,y,\hbar D_y)\le 2\l^2(|x|^2+|y|^2)+2(|\xi|^2-\hbar^2\Dlt_y)=2(\l^2|x|^2+|\xi|^2)+2(\l^2|y|^2-\hbar^2\Dlt_y)
$$
so that
$$
\ba
\iint_{\bR^d\times\bR^d}\Tr_\fH(Q(x,\xi)^{1/2}c(x,\xi,y,\hbar D_y)Q(x,\xi)^{1/2})dxd\xi
\\
\le 2\iint_{\bR^d\times\bR^d}\Tr_\fH(Q(x,\xi)^{1/2}(\l^2|x|^2+|\xi|^2)Q(x,\xi)^{1/2})dxd\xi
\\
+2\iint_{\bR^d\times\bR^d}\Tr_\fH(Q(x,\xi)^{1/2}(\l^2|y|^2-\hbar^2\Dlt_y)Q(x,\xi)^{1/2})dxd\xi&\,.
\ea
$$
First
$$
\ba
\iint_{\bR^d\times\bR^d}\Tr_\fH(Q(x,\xi)^{1/2}(\l^2|x|^2+|\xi|^2)Q(x,\xi)^{1/2})dxd\xi
\\
=
\iint_{\bR^d\times\bR^d}(\l^2|x|^2+|\xi|^2)\Tr_\fH(Q(x,\xi))dxd\xi
\\
=
\iint_{\bR^d\times\bR^d}(\l^2|x|^2+|\xi|^2)f(x,\xi)dxd\xi&\,.
\ea
$$

Since $R\in\cD^2(\fH)$, one has
$$
\ba
\Tr_\fH(R^{1/2}(\l^2|y|^2-\hbar^2\Dlt_y)R^{1/2})
\\
=\Tr_{\fH}((\l^2|y|^2-\hbar^2\Dlt_y)^{1/2}R(\l^2|y|^2-\hbar^2\Dlt_y)^{1/2})
\\
=\iint_{\bR^d\times\bR^d}\Tr_{\fH}((\l^2|y|^2-\hbar^2\Dlt_y)^{1/2}Q(x,\xi)dxd\xi(\l^2|y|^2-\hbar^2\Dlt_y)^{1/2})<\infty&\,,
\ea
$$
where the first equality is \eqref{FiniteEnerg2}, while the second follows from the monotone convergence theorem (Theorem 1.27 in \cite{RudinRCA}) applied to a spectral decomposition of the harmonic oscillator $\l^2|y|^2-\hbar^2\Dlt_y$.

In particular
$$
\Tr_{\fH}(\l^2|y|^2-\hbar^2\Dlt_y)^{1/2}Q(x,\xi)(\l^2|y|^2-\hbar^2\Dlt_y)^{1/2})<\infty
$$
for a.e. $(x,\xi)\in\bR^d\times\bR^d$. Applying Lemma \ref{L-FiniteEnerg} to $A=\l^2|y|^2-\hbar^2\Dlt_y$ and $T=Q(x,\xi)$ for a.e. $(x,\xi)\in\bR^d\times\bR^d$, one has
$$
\ba
\Tr_{\fH}((\l^2|y|^2-\hbar^2\Dlt_y)^{1/2}Q(x,\xi)(\l^2|y|^2-\hbar^2\Dlt_y)^{1/2})
\\
=
\Tr_{\fH}(Q(x,\xi)^{1/2}(\l^2|y|^2-\hbar^2\Dlt_y)Q(x,\xi)^{1/2})
\ea
$$
for a.e. $(x,\xi)\in\bR^d\times\bR^d$. Integrating both sides of this equality over $\bR^d\times\bR^d$, one finds that
$$
\ba
\iint_{\bR^d\times\bR^d}\Tr_{\fH}(Q(x,\xi)^{1/2}(\l^2|y|^2-\hbar^2\Dlt_y)Q(x,\xi)^{1/2})dxd\xi
\\
=
\Tr_{\fH}((\l^2|y|^2-\hbar^2\Dlt_y)^{1/2}R(\l^2|y|^2-\hbar^2\Dlt_y)^{1/2})<\infty&\,.
\ea
$$

In particular
$$
\Tr_{\fH}(Q(x,\xi)^{1/2}c(x,\xi,y,\hbar D_y)Q(x,\xi)^{1/2})<\infty
$$
for a.e. $(x,\xi)\in\bR^d\times\bR^d$. Applying again Lemma \ref{L-FiniteEnerg} with $A=c(x,\xi,y,\hbar D_y)$ and $T=Q(x,\xi)$ for all such $(x,\xi)$ shows that
$$
\ba
\Tr_{\fH}(Q(x,\xi)^{1/2}c(x,\xi,y,\hbar D_y)Q(x,\xi)^{1/2})
\\
=\Tr_{\fH}(c(x,\xi,y,\hbar D_y)^{1/2}Q(x,\xi)c(x,\xi,y,\hbar D_y)^{1/2})
\ea
$$
for a.e. $(x,\xi)\in\bR^d$, and the equality in the lemma follows from integrating both sides of this last identity over $\bR^d\times\bR^d$.
\end{proof}

\smallskip
The main properties of this pseudo-metric are recalled in the following theorem. Before stating it, we recall some fundamental notions and introduce some notations.

The Wigner transform of $R\in\cD(\fH)$ is 
$$
W_{\hbar}[R](x,\xi)=\tfrac1{(2\pi)^d}\int_{\bR^d}r(x+\tfrac12\hbar y,x-\tfrac12\hbar y)e^{-i\xi\cdot y}dy
$$
where $r$ is the integral kernel of $R$. Obviously $W_{\hbar}[R]$ is real-valued, but in general $W_{\hbar}[R]$ is not a.e. nonnegtive in general. 

Instead of the Wigner transform, one can consider a mollified variant thereof, the Husimi transform of $R$, that is
$$
\tilde W_{\hbar}[R](x,\xi)=(e^{\hbar\Dlt_{x,\xi}/4}W_{\hbar}[R])(x,\xi)\ge 0\hbox{ for a.e. }(x,\xi)\in\bR^d\times\bR^d\,.
$$

The Schr\"odinger coherent state is
$$
|q,p\ra(x):=(\pi\hbar)^{-d/4}e^{-|x-q|^2/2\hbar}e^{ip\cdot(x-q/2)/\hbar}\,.
$$
For each Borel probability measure $\mu$ on $\bR^d\times\bR^d$, one defines the T\"oplitz operator with symbol $(2\pi\hbar)^d\mu$:
$$
\OP^T_{\hbar}[(2\pi\hbar)^d\mu]:=\iint_{\bR^d\times\bR^d}|q,p\ra\la q,p|\mu(dqdp)\in\cD(\fH)\,.
$$

\begin{Prop}\lb{P-UpBound}
For each probability density $f$ and each Borel probability measure $\mu$ on $\bR^d\times\bR^d$ with finite second order moment \eqref{2ndMom}. Then
$$
\OP^T_{\hbar}[(2\pi\hbar)^d\mu]\in\cD^2(\fH)\,,
$$
and one has
$$
E_{\hbar,\l}(f,\OP^T_{\hbar}[(2\pi\hbar)^d\mu])^2\le\max(1,\l^2)\MKd(f,\mu)^2+\tfrac12(\l^2+1)d\hbar\,.
$$
\end{Prop}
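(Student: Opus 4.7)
The strategy is to construct, from an optimal Monge--Kantorovich coupling between \(f\) and \(\mu\), an explicit element of \(\cC(f,\OP^T_\hbar[(2\pi\hbar)^d\mu])\) and compute its cost via coherent-state averages of \(c_\l\). Before doing so, I would establish \(\OP^T_\hbar[(2\pi\hbar)^d\mu]\in\cD^2(\fH)\): the first and second moments \(\langle q,p|y_j|q,p\rangle=q_j\), \(\langle q,p|(y_j-q_j)^2|q,p\rangle=\hbar/2\), and \(\langle q,p|(-i\hbar\d_{y_j}-p_j)^2|q,p\rangle=\hbar/2\) give \(\langle q,p|(-\hbar^2\Dlt_y+|y|^2)|q,p\rangle=|p|^2+|q|^2+d\hbar\); substituting the definition of the T\"oplitz operator in the square-root-sandwiched trace and exchanging trace with the \(\mu\)-integration via monotone convergence on a spectral decomposition of the harmonic oscillator (as in the previous lemma) yields the finite quantity \(\iint(|p|^2+|q|^2+d\hbar)\mu(dqdp)\).

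Next, fix an optimal coupling \(\pi\) for \(\MKd(f,\mu)\)---whose existence follows from the standard Monge--Kantorovich theorem for the quadratic cost on \(\bR^{2d}\)---and disintegrate it against its first marginal \(f(x,\xi)dxd\xi\), obtaining a measurable family \((\pi_{x,\xi})\) of Borel probability measures on \(\bR^{2d}\). Set
\[
Q(x,\xi):=f(x,\xi)\iint_{\bR^{2d}}|q,p\rangle\langle q,p|\,\pi_{x,\xi}(dqdp).
\]
Positivity and self-adjointness are inherited from the integrand; the trace normalization \(\Tr_\fH Q(x,\xi)=f(x,\xi)\) follows from \(\Tr|q,p\rangle\langle q,p|=1\); and the marginal identity \(\iint Q(x,\xi)dxd\xi=\OP^T_\hbar[(2\pi\hbar)^d\mu]\) follows from the second marginal of \(\pi\) being \(\mu\). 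Hence \(Q\in\cC(f,\OP^T_\hbar[(2\pi\hbar)^d\mu])\).

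The key computation is the coherent-state expectation
\[
\langle q,p|c_\l(x,\xi,y,\hbar D_y)|q,p\rangle=\l^2|x-q|^2+|\xi-p|^2+\tfrac12(\l^2+1)d\hbar,
\]
obtained by expanding \(|x-y|^2\) and \(|\xi-\hbar D_y|^2\) about \(q\) and \(p\) and using the common variance \(\hbar/2\) in each coordinate of \(|q,p\rangle\). Combined with the trace-integration exchange provided by the previous lemma and Fubini, this gives
\[
\iint\Tr_\fH\!\bigl(Q(x,\xi)^{1/2}c_\l Q(x,\xi)^{1/2}\bigr)dxd\xi=\int\bigl[\l^2|x-q|^2+|\xi-p|^2\bigr]\pi(dxd\xi\,dqdp)+\tfrac12(\l^2+1)d\hbar.
\]
The elementary bound \(\l^2|x-q|^2+|\xi-p|^2\le\max(1,\l^2)(|x-q|^2+|\xi-p|^2)\) together with the optimality identity \(\int(|x-q|^2+|\xi-p|^2)\pi=\MKd(f,\mu)^2\) then yields the stated estimate on \(E_{\hbar,\l}(f,\OP^T_\hbar[(2\pi\hbar)^d\mu])^2\).

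The main obstacle is justifying the trace-integration interchanges against the unbounded cost operator \(c_\l\): the previous lemma requires \(\Tr(Q(x,\xi)^{1/2}c_\l Q(x,\xi)^{1/2})<\infty\) almost everywhere, which is precisely what the final bound is about to establish, so the argument must be organized to first produce a finite upper bound (using \(|x-y|^2\le 2|x-q|^2+2|y-q|^2\) etc.\ exactly as in the preceding lemma) and only then identify it with the claimed right-hand side. A secondary technical point is Borel measurability of \((x,\xi)\mapsto Q(x,\xi)\) in \(\cL^1(\fH)\), which follows from measurability of the disintegration \(\pi_{x,\xi}\) combined with strong continuity of \((q,p)\mapsto|q,p\rangle\langle q,p|\).
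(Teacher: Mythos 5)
Your proposal is correct and follows essentially the same route as the paper: build the coupling $Q(x,\xi)=\OP^T_\hbar[(2\pi\hbar)^d f(x,\xi)\pi_{x,\xi}]$ from a disintegrated optimal Monge--Kantorovich plan, evaluate the coherent-state average $\langle q,p|c_\l|q,p\rangle=\l^2|x-q|^2+|\xi-p|^2+\tfrac12(\l^2+1)d\hbar$, and exchange trace and integration by monotone convergence on the spectral decomposition of the shifted harmonic oscillator (with membership in $\cD^2(\fH)$ obtained from the same formula at $x=\xi=0$). If anything, you are slightly more careful than the paper's write-up on the $\max(1,\l^2)$ comparison between the weighted cost and the quadratic $\MKd$ cost, and on ordering the finiteness argument before invoking the trace-exchange lemma.
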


\begin{proof}
Let $P(x,\xi,dqdp)$ be an optimal coupling of $f(x,\xi)$ and $\mu(dqdp)$ for $\MKd$. Set $Q(x,\xi):=\OP^T_{\hbar}[(2\pi\hbar)^dP(x,\xi,\cdot)]$. Then $Q\in\cC(f,\OP^T_{\hbar}[(2\pi\hbar)^d\mu])$ according to Lemma 3.1 in \cite{FGTPaul}), 
so that
$$
\ba
E_{\hbar,\l}(f,\OP^T_{\hbar}[(2\pi\hbar)^d\mu])^2
\\
\le\iint_{\bR^d\times\bR^d}\Tr_{\fH}(Q(x,\xi)^{1/2}c_\l(x,\xi,y,\hbar D_y)Q(x,\xi)^{1/2})dxd\xi\,.
\ea
$$

For each $p,q\in\bR^d$, one has
$$
\ba
\Tr_{\fH}(c_\l(x,\xi,y,\hbar D_y)^{1/2} |q,p\ra\la q,p| c_\l(x,\xi,y,\hbar D_y)^{1/2})
\\
=\la q,p|c_\l(x,\xi,y,\hbar D_y) |q,p\ra=\l^2|x-q|^2+|\xi-p|^2+\tfrac12(\l^2+1)\hbar
\ea
$$
according to fla. (55) in \cite{FGMouPaul}. For each finite positive Borel measure $m$ on $\bR^d\times\bR^d$, one has
$$
\ba
\Tr_{\fH}(c_\l(x,\xi,y,\hbar D_y)^{1/2}\OP^T_{\hbar}[(2\pi\hbar)^dm] c_\l(x,\xi,y,\hbar D_y)^{1/2})
\\
=\iint_{\bR^d\times\bR^d}(\l^2|x-q|^2+|\xi-p|^2+\tfrac12(\l^2+1)\hbar)m(dpdq)&\,.
\ea
$$
by the monotone convergence theorem (Theorem 1.27 in \cite{RudinRCA}) applied to a spectral decomposition of the transportation cost operator $c_\l(x,\xi,y,\hbar D_y)$, which is a shifted harmonic oscillator.

Specializing this formula to the case $x=\xi=0$ and $m=\mu$ shows that the operator $\OP^T_{\hbar}[(2\pi\hbar)^d\mu] \in\cD^2(\fH)$.

Specializing this formula to the case $m=P(x,\xi,dqdp)$ and integrating in $(x,\xi)$ shows that
$$
\ba
\iint_{\bR^d\times\bR^d}\Tr_{\fH}(c_\l(x,\xi,y,\hbar D_y)^{1/2}\OP^T_{\hbar}[(2\pi\hbar)^dP(x,\xi,\cdot)] c_\l(x,\xi,y,\hbar D_y)^{1/2})dxd\xi
\\
=\iint_{\bR^d\times\bR^d}\iint_{\bR^d\times\bR^d}(\l^2|x-q|^2+|\xi-p|^2)P(x,\xi,dqdp)+\tfrac12(\l^2+1)
\\
=\MKd(f,\mu)^2+\tfrac12(\l^2+1)\hbar
\ea
$$
and since $Q:\,(x,\xi)\mapsto\OP^T_{\hbar}[(2\pi\hbar)^dP(x,\xi,\cdot)]$ belongs to $\cC(f,\OP^T_{\hbar}[(2\pi\hbar)^d\mu])$, 
$$
\ba
\iint_{\bR^d\times\bR^d}\Tr_{\fH}(Q(x,\xi)^{1/2}c_\l(x,\xi,y,\hbar D_y)Q(x,\xi)^{1/2})dxd\xi
\\
=\iint_{\bR^d\times\bR^d}\Tr_{\fH}(c_\l(x,\xi,y,\hbar D_y)^{1/2}Q(x,\xi)c_\l(x,\xi,y,\hbar D_y)^{1/2})dxd\xi&\,.
\ea
$$
With the previous equality and the inequality above, the proof is complete.
\end{proof}


\section{Evolution of the pseudo-metric under the Schr\"odinger dynamics}\label{evolmet}


Denote by $t\mapsto(X(t;x,\xi),\Xi(t;x,\xi))$ the solution of the Cauchy problem for the Hamiltonian system
$$
\dot{X}=\Xi\,,\quad\dot{\Xi}=-\grad V(X)\,,\qquad (X(0;x,\xi),\Xi(0;x,\xi))=(x,\xi)\,.
$$
Since $V\in C^{1,1}(\bR^d)$, this solution is defined for all $t\in\bR$, for all $x,\xi\in\bR^d$. Henceforth, we denote by $\Phi_t$ the map $(x,\xi)\mapsto\Phi_t(x,\xi):=(X(t;x,\xi),\Xi(t;x,\xi))$, and by $H\equiv H(x,\xi):=\tfrac12|\xi|^2+V(x)$
the Hamiltonian.

On the other hand, assume that $V^-\in L^{d/2}(\bR^d)$, so that $\cH:=-\tfrac12\hbar^2\Dlt+V$ is self-adjoint on $\fH$ by Lemma 4.8b in chapter VI, \S 4 of \cite{Kato}. Then $U(t):=\exp(it\cH/\hbar)$ is a unitary group on $\fH$.

\begin{Thm}\lb{T-IneqFGTP}
Let $f^{in}$ be a probability density on $\bR^d\times\bR^d$ which satisfies \eqref{2ndMom}, and let $R^{in}\in\cD^2(\fH)$. For each $t\ge 0$, set
$$
R(t):=U(t)^*R^{in}U(t)\,,\quad f(t,X,\Xi):=f^{in}(\Phi_{-t}(X,\Xi))\quad\hbox{ for a.e. }(X,\Xi)\in\bR^d\times\bR^d\,.
$$
Then, for each $\l>0$ and each $t\ge 0$, one has
$$
E_{\hbar,\l}(f(t,\cdot,\cdot),R(t))\le E_{\hbar,\l}(f^{in},R^{in})\exp\left(\tfrac12t\left(\l+\frac{\Lip(\grad V)^2}\l\right)t\right)\,.
$$
\end{Thm}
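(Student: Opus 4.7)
The plan is to transport a near-optimal initial coupling along the joint quantum--classical dynamics and close a Gronwall estimate. For $\eta>0$, pick $Q^{in}\in\cC(f^{in},R^{in})$ such that
$$
\iint_{\bR^d\times\bR^d}\Tr_\fH(Q^{in}(x,\xi)^{1/2}c_\l(x,\xi,y,\hbar D_y)Q^{in}(x,\xi)^{1/2})dxd\xi\le E_{\hbar,\l}(f^{in},R^{in})^2+\eta\,,
$$
and evolve it via
$$
Q(t,X,\Xi):=U(t)^*Q^{in}(\Phi_{-t}(X,\Xi))U(t)\,.
$$
Measure preservation of the Hamiltonian flow $\Phi_t$ combined with unitarity of $U(t)$ shows that $Q(t,\cdot,\cdot)\in\cC(f(t,\cdot,\cdot),R(t))$, so $E_{\hbar,\l}(f(t),R(t))^2\le\cE(t)$, with $\cE(t)$ the associated (possibly non-optimal) cost at time $t$. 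After the substitution $(X,\Xi)=\Phi_t(x,\xi)$ and the unitary/cyclic invariance of the trace,
$$
\cE(t)=\iint_{\bR^d\times\bR^d}\Tr_\fH(Q^{in}(x,\xi)^{1/2}\Ga(t,x,\xi)Q^{in}(x,\xi)^{1/2})dxd\xi\,,
$$
where $\Ga(t,x,\xi):=\l^2|X(t;x,\xi)-y(t)|^2+|\Xi(t;x,\xi)-p(t)|^2$ and the Heisenberg-evolved observables are $y(t):=U(t)yU(t)^*$, $p(t):=U(t)\hbar D_yU(t)^*$.

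A direct commutator computation gives the Heisenberg equations $\dot y(t)=p(t)$ and $\dot p(t)=-(\grad V)(y(t))$, mirroring the classical equations for $(X(t),\Xi(t))$. Setting $A(t):=X(t;x,\xi)-y(t)$, $B(t):=\Xi(t;x,\xi)-p(t)$ and $F(t):=(\grad V)(X(t;x,\xi))-(\grad V)(y(t))$, so that $\dot A=B$ and $\dot B=-F$, one obtains
$$
\tfrac{d}{dt}\Ga(t,x,\xi)=\l^2\sum_{j=1}^d(A_jB_j+B_jA_j)-\sum_{j=1}^d(F_jB_j+B_jF_j)\,.
$$
Two operator inputs close the estimate. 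First, the Young inequality $\pm(UV+VU)\le\alpha U^2+\alpha^{-1}V^2$ ($\alpha>0$) applied componentwise. Second, the bound $\sum_jF_j(t)^2\le\Lip(\grad V)^2\sum_jA_j(t)^2$, which follows from the scalar inequality $|(\grad V)(X)-(\grad V)(y)|^2\le\Lip(\grad V)^2|X-y|^2$ by joint functional calculus on the commuting self-adjoint family $\{y_j(t)\}_{j=1}^d$. Taking $\alpha=\l$ when bounding the first sum and $\alpha=\l/\Lip(\grad V)^2$ when bounding the second produces the operator inequality
$$
\tfrac{d}{dt}\Ga(t,x,\xi)\le\big(\l+\Lip(\grad V)^2/\l\big)\,\Ga(t,x,\xi)\,.
$$
Sandwiching by $Q^{in}(x,\xi)^{1/2}$, tracing, and integrating in $(x,\xi)$ yield $\tfrac{d}{dt}\cE(t)\le(\l+\Lip(\grad V)^2/\l)\,\cE(t)$; Gronwall, a square root, and letting $\eta\to 0$ give the claimed bound.

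The main technical obstacle is the rigorous justification of these formal manipulations for unbounded operator-valued functions of $t$: the differentiability of $\Ga(t,x,\xi)$ on a common dense core, the exchange of $\tfrac{d}{dt}$ with $\Tr_\fH$ and with $(x,\xi)$-integration, and the manipulation of products of unbounded Heisenberg observables. The standard remedy is to first establish the inequality on a dense class of initial data for which these operations are unambiguously licit---for instance T\"oplitz operators with smooth, compactly supported symbols, quantitatively controlled through Proposition \ref{P-UpBound}---and then to extend by an approximation plus lower-semicontinuity argument for $E_{\hbar,\l}$.
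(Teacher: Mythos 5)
Your proposal is correct in substance and follows the same strategy as the paper: push a (near-)optimal coupling forward by $Q(t,X,\Xi)=U(t)^*Q^{in}(\Phi_{-t}(X,\Xi))U(t)$, verify it is a coupling of $(f(t),R(t))$, and close a Gronwall estimate on the transported cost by combining the operator Young inequality with $\sum_jF_j^2\le\Lip(\grad V)^2\sum_jA_j^2$; your Heisenberg-picture operator $\Ga(t,x,\xi)$ is exactly the paper's conjugated cost $U(t)c_\l(\Phi_t(x,\xi),y,\hbar D_y)U(t)^*$, and your identity $\dot\Ga=\l^2\sum(A_jB_j+B_jA_j)-\sum(F_jB_j+B_jF_j)$ is the paper's ``Poisson bracket plus commutator'' computation in disguise. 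The one genuine divergence is in how the formal differentiation of an unbounded operator-valued function is justified --- the point you rightly flag as the main obstacle. You propose to prove the inequality first for regular data and extend by approximation together with lower semicontinuity of $E_{\hbar,\l}$; that property is not established in the paper and is not free. The paper instead resolves the issue directly: it diagonalizes $Q^{in}(x,\xi)=\sum_j\rho_j|e_j\ra\la e_j|$, differentiates the \emph{scalar} quantity $\la U(t)\phi|c_\l(\Phi_t(x,\xi),y,\hbar D_y)|U(t)\phi\ra$ for $\phi\in C^\infty_c(\bR^d)$ (where all manipulations are licit), runs Gronwall at that level, and then passes to the eigenvectors $e_j$ by density of $C^\infty_c(\bR^d)$ in the form domain of $c_\l$ before summing and integrating; this is more elementary and requires no continuity of the pseudometric. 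A last remark: your choice of Young parameters (like the paper's own displayed chain) actually yields the coefficient $\l+\max(1,\Lip(\grad V)^2)/\l$ rather than $\l+\Lip(\grad V)^2/\l$ when $\Lip(\grad V)<1$, since the coefficients of $\sum_jA_j^2$ and $\sum_jB_j^2$ cannot both be matched in that regime; this slight defect is inherited from the paper and is harmless for the applications.
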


This theorem is a slight improvement of Theorem 2.7 in \cite{FGTPaul} in the special case $N=1$. For the sake of being complete, we recall the argument in \cite{FGTPaul}, with the appropriate modifications.

\begin{proof}
Let $Q^{in}\in\cC(f^{in},R^{in})$. Set 
$$
Q(t,X,\Xi):=U(t)^*Q^{in}\circ\Phi_{-t}(X,\Xi)U(t)
$$
for all $t\in\bR$ and a.e. $(x,\xi)\in\bR^d\times\bR^d$, and
$$
\cE(t):=\iint_{\bR^{2d}}\Tr_{\fH}(Q(t,X,\Xi)^{1/2}c_\l(X,\Xi,y,\hbar D_y)Q(t,X,\Xi)^{1/2})dXd\Xi\,.
$$
Since $\Phi_t$ leaves the phase space volume element $dxd\xi$ invariant
$$
\cE(t)\!=\!\!\!\iint_{\bR^{2d}}\!\!\Tr_{\fH}(\sqrt{Q^{in}(x,\xi)}U(t)c_\l(\Phi_t(x,\xi),y,\hbar D_y)U(t)^*\sqrt{Q^{in}(x,\xi)})dx\xi\,.
$$

By construction, $Q(t,\cdot,\cdot)\in\cC(f(t,\cdot,\cdot),R(t))$. Indeed, for a.e. $(X,\Xi)\in\bR^d$, 
$$
0\le Q^{in}(\Phi_{-t}(X,\Xi))=Q^{in}(\Phi_{-t}(X,\Xi))^*\in\cL(\fH)
$$
so that $Q(t,X,\Xi)\in\cL(\fH)$ satisfies
$$
\ba
Q(t,X,\Xi)=&U(t)Q^{in}(\Phi_{-t}(X,\Xi))U(t)^*
\\
=&U(t)Q^{in}(\Phi_{-t}(X,\Xi))U(t)^*=Q(t,X,\Xi)^*\ge 0\,.
\ea
$$
Besides
$$
\Tr_\fH(Q(t,X,\Xi))=\Tr_\fH(Q^{in}(\Phi_{-t}(X,\Xi)))=f^{in}(\Phi_{-t}(X,\Xi))=f(t,X,\Xi)
$$
while
$$
\ba
\iint_{\bR^d\times\bR^d}Q(t,X,\Xi)dXd\Xi=U(t)\left(\iint_{\bR^d\times\bR^d}Q^{in}(\Phi_{-t}(X,\Xi))dXd\Xi\right)U(t)^*
\\
=U(t)\left(\iint_{\bR^d\times\bR^d}Q^{in}(x,\xi)dxd\xi\right)U(t)^*=U(t)R^{in}U(t)^*=R(t)&\,.
\ea
$$
In particular
$$
\cE(t)\ge E_{\hbar,\l}(f(t),R(t))\,,\qquad\hbox{ for each }t\ge 0\,.
$$

Let $e_j(x,\xi,\cdot)$ for $j\in\bN$ be a $\fH$-complete orthonormal system of eigenvectors of $Q^{in}(x,\xi)$ for a.e. $x,\xi\in\bR^d$. Hence
$$
\ba
\Tr_{\fH}(\sqrt{Q^{in}(x,\xi)}U(t)c_\l(\Phi_t(x,\xi),y,\hbar D_y)U(t)^*\sqrt{Q^{in}(x,\xi)})
\\
=\sum_{j\in\bN}\rho_j(x,\xi)\la U(t)e_j(x,\xi)|c_\l(\Phi_t(x,\xi),y,\hbar D_y)|U(t)e_j(x,\xi)\ra
\ea
$$
where $\rho_j(x,\xi)$ is the eigenvalue of $Q^{in}(x,\xi)$ defined by
$$
Q^{in}(x,\xi)e_j(x,\xi)=\rho_j(x,\xi)e_j(x,\xi)\,,\quad\hbox{ for a.e. }(x,\xi)\in\bR^d\times\bR^d\,.
$$

If $\phi\equiv\phi(y)\in C^\infty_c(\bR^d)$, the map
$$
t\mapsto\la U(t)\phi|c_\l(\Phi_t(x,\xi),y,\hbar D_y)|U(t)\phi\ra
$$
is of class $C^1$ on $\bR$, and one has
$$
\ba
\frac{d}{dt}\la U(t)\phi|c_\l(\Phi_t(x,\xi),y,\hbar D_y)|U(t)\phi\ra
\\
=\La\frac{i}{\hbar}\cH U(t)\phi\Big|c_\l(\Phi_t(x,\xi),y,\hbar D_y)\Big|U(t)\phi\Ra
\\
+\La U(t)\phi\Big|c_\l(\Phi_t(x,\xi),y,\hbar D_y)\Big|\frac{i}{\hbar}\cH U(t)\phi\Ra
\\
+\la U(t)\phi|\{H(\Phi_t(x,\xi)),c_\l(\Phi_t(x,\xi),y,\hbar D_y)\}|U(t)\phi\ra&\,.
\ea
$$
In other words
$$
\ba
\frac{d}{dt}\la U(t)\phi|c_\l(\Phi_t(x,\xi),y,\hbar D_y)|U(t)\phi\ra
\\
=\La U(t)\phi\Big|\frac{i}\hbar[\cH,c_\l(\Phi_t(x,\xi),y,\hbar D_y)]\Big|U(t)\phi\Ra
\\
+\la U(t)\phi|\{H(\Phi_t(x,\xi)),c_\l(\Phi_t(x,\xi),y,\hbar D_y)\}|U(t)\phi\ra&\,.
\ea
$$

A straightforward computation shows that
$$
\ba
\{H(\Phi_t(x,\xi)),c_\l(\Phi_t(x,\xi),y,\hbar D_y)\}+\frac{i}\hbar[\cH,c_\l(\Phi_t(x,\xi),y,\hbar D_y)]
\\
=\l^2\sum_{k=1}^d\left((X_k-y_k)(\Xi_k-\hbar D_{y_k})+(\Xi_k-\hbar D_{y_k})(X_k-y_k)\right)
\\
-\sum_{k=1}^d\left((\d_kV(X)-\d_kV(y))(\Xi_k-\hbar D_{y_k})+(\Xi_k-\hbar D_{y_k})(\d_kV(X)-\d_kV(y))\right)
\\
\le\l\sum_{k=1}^d(\l^2|X_k\!-\!y_k|^2\!+\!|\Xi_k\!-\!\hbar D_{y_k}|^2)\!+\!\frac1\l\sum_{k=1}^d\left(\l^2|\d_kV(X)\!-\!\d_kV(y)|^2\!+\!|\Xi_k\!-\!\hbar D_{y_k}|^2\right)
\\
\le\l\sum_{k=1}^d(\l^2|X_k-y_k|^2+|\Xi_k-\hbar D_{y_k}|^2)+\frac{\Lip(\grad V)^2}\l\sum_{k=1}^d\left(\l^2|X_k-y|^2+|\Xi_k-\hbar D_{y_k}|^2\right)
\\
\le\left(\l+\frac{\Lip(\grad V)^2}\l\right)c_\l(X,\Xi,y,\hbar D_y)&\,.
\ea
$$
Hence
$$
\ba
\la U(t)\phi|c_\l(\Phi_t(x,\xi),y,\hbar D_y)|U(t)\phi\ra\le\la\phi|c_\l(x,\xi,y,\hbar D_y)|\phi\ra
\\
+\left(\l+\frac{\Lip(\grad V)^2}\l\right)\int_0^t\la U(s)\phi|c_\l(\Phi_s(x,\xi),y,\hbar D_y)|U(s)\phi\ra ds
\ea
$$
so that
$$
\la U(t)\phi|c_\l(\Phi_t(x,\xi),y,\hbar D_y)|U(t)\phi\ra\le\la\phi|c_\l(x,\xi,y,\hbar D_y)|\phi\ra\exp\left(\left(\l+\frac{\Lip(\grad V)^2}\l\right)t\right)
$$
for each $\phi\in C^\infty_c(\bR^d)$. By density of $C^\infty_c(\bR^d)$ in the form domain of $c_\l(x,\xi,y,\hbar D_y)$
$$
\ba
0\le\la U(t)e_j(x,\xi)|c_\l(\Phi_t(x,\xi),y,\hbar D_y)|U(t)e_j(x,\xi)\ra
\\
\le\la e_j(x,\xi)|c_\l(x,\xi,y,\hbar D_y)|e_j(x,\xi)\ra\exp\left(\left(\l+\frac{\Lip(\grad V)^2}\l\right)t\right)
\ea
$$
for a.e. $(x,\xi)\in\bR^d\times\bR^d$, so that
$$
\ba
\Tr_{\fH}(\sqrt{Q^{in}(x,\xi)}U(t)c_\l(\Phi_t(x,\xi),y,\hbar D_y)U(t)^*\sqrt{Q^{in}(x,\xi)})
\\
=\sum_{j\in\bN}\rho_j(x,\xi)\la U(t)e_j(x,\xi)|c_\l(\Phi_t(x,\xi),y,\hbar D_y)|U(t)e_j(x,\xi)\ra
\\
\le\exp\left(\left(\l+\frac{\Lip(\grad V)^2}\l\right)t\right)\sum_{j\in\bN}\rho_j(x,\xi)\la e_j(x,\xi)|c_\l(x,\xi,y,\hbar D_y)|e_j(x,\xi)\ra
\\
=\exp\left(\left(\l+\frac{\Lip(\grad V)^2}\l\right)t\right)\Tr_{\fH}(\sqrt{Q^{in}(x,\xi)}c_\l(x,\xi,y,\hbar D_y)\sqrt{Q^{in}(x,\xi)})&\,.
\ea
$$
Integrating both side of this inequality over $\bR^d\times\bR^d$ shows that
$$
\cE(t)\le\cE(0)\exp\left(\left(\l+\frac{\Lip(\grad V)^2}\l\right)t\right)\,.
$$

Hence, for each $t\ge 0$ and each $Q^{in}\in\cC(f,R)$, one has
$$
E_{\hbar,\l}(f(t),R(t))^2\le\cE(0)\exp\left(\left(\l+\frac{\Lip(\grad V)^2}\l\right)t\right)\,.
$$
Minimizing the right hand side of this inequality as $Q^{in}$ runs through $\cC(f^{in},R^{in})$, one arrives at the inequality
$$
E_{\hbar,\l}(f(t),R(t))\le E_{\hbar,\l}(f^{in},R^{in})\exp\left(\tfrac12\left(\l+\frac{\Lip(\grad V)^2}\l\right)t\right)\,.
$$
\end{proof}


\section{The observation inequality}\label{observmet}


In this section, we state and prove an observation inequality for the Schr\"odinger equation.

Let $K$ be a compact subset of $\bR^d\times\bR^d$, let $\Om$ be an open set of $\bR^d$ and let $T>0$. We recall the ``geometric condition'' \`a la Bardos-Lebeau-Rauch \cite{BLR} for this problem:
$$
\hbox{for each }(x,\xi)\in K\,,\hbox{ there exists }t\in(0,T)\hbox{ s.t. }X(t;x,\xi)\in\Om\,.\leqno{\hbox{(GC)}}
$$

 \begin{Thm}\lb{T-Obs}
Assume that $V$ belongs to $C^{1,1}(\bR^d)$ and that $V^-\in L^{d/2}(\bR^d)$. Let $T>0$, let $K\subset\bR^d\times\bR^d$ be compact and let $\Om\subset\bR^d$ be an open set of $\bR^d$ satisfying (GC). Let $\chi\in\Lip(\bR^d)$ 
be such that $\chi(x)>0$ for each $x\in\Om$.


For each $t\ge 0$, set
$$
R(t):=U(t)^*R^{in}U(t)\,,\quad f(t,X,\Xi):=f^{in}(\Phi_{-t}(X,\Xi))\quad\hbox{ for a.e. }(X,\Xi)\in\bR^d\times\bR^d\,.
$$
Then, when $R^{in}$ is a pure state $|\psi^{in}\rangle\langle\psi^{in}|$,
$$
\ba
\int_0^T
\int_{\bR^d}\chi(x)|\psi(t,x)|^2dx)dt\ge&
\inf_{(x,\xi)\in K}\int_0^T\chi(X(t;x,\xi))dt
\iint_{(x,\xi)\in K}\hus{\psi^{in}}(x,\xi)dxd\xi\nn\\
\\
&-4{\Lip(\chi)}\frac{\exp\left(\tfrac12\left(1+{\Lip(\grad V)^2}\right)T\right)-1}{\tfrac12\left(1+{\Lip(\grad V)^2}\right)}
\Delta(\psi^{in}).
\ea
$$
When $R^{in}:=\OP^T[(2\pi\hbar)^df^{in}]$ is a T\"oplitz operator of symbol   a probability density $f^{in}$ on $\bR^d\times\bR^d$ with support in $K$,  
$$
\ba
\int_0^T\Tr(\chi R(t))dt\ge&\inf_{(x,\xi)\in K}\int_0^T\chi(X(t;x,\xi))dt
\\
&-\Lip(\chi)C(T,\Lip(\grad V))\sqrt{2d\hbar}
\ea
$$
where
$$
C(T,L)=\inf_{\l>0}\frac{\exp\left(\tfrac12\left(\l+\frac{L^2}\l\right)T\right)-1}{\left(\l+\frac{L^2}{\l}\right)}\sqrt{1+\frac1{\l^2}}\,.
$$
In particular, setting $\l=L$
$$
C(T,L)\le\frac{e^{LT}-1}{2L}\sqrt{1+\frac1{L^2}}\,.
$$
\end{Thm}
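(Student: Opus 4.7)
The plan is to bound $\int_0^T \Tr(\chi R(t)) dt$ below by splitting it as a purely classical lower bound on $\int_0^T \iint \chi(X) f(t, X, \Xi) dX d\Xi dt$ (where $f^{in}$ is a well-chosen probability density and $f(t)$ is its Liouville transport) plus a quantum-classical error controlled by the pseudometric $E_{\hbar, \l}$ of Section \ref{pseudomet}. The classical piece is straightforward: by the volume-preservation of $\Phi_t$ and Fubini,
$$
\int_0^T \iint \chi(X) f(t, X, \Xi) dX d\Xi dt = \iint f^{in}(x, \xi) \left( \int_0^T \chi(X(t; x, \xi)) dt \right) dx d\xi,
$$
and since $\chi \ge 0$, restricting the outer integral to $K$ yields $\bigl( \iint_K f^{in} \bigr) \cdot \inf_{(x, \xi) \in K} \int_0^T \chi(X(t; x, \xi)) dt$. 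In the T\"oplitz case this mass is $1$ (as $\Supp f^{in} \subset K$); in the pure state case, choosing $f^{in} := \hus{\psi^{in}}$ produces exactly the factor $\iint_K \hus{\psi^{in}} dx d\xi$ in the statement.

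For the quantum-classical error, I fix any coupling $Q(t, \cdot, \cdot) \in \cC(f(t), R(t))$ and write
$$
\Tr(\chi R(t)) - \iint \chi(X) f(t) dX d\Xi = \iint \Tr_\fH\!\bigl( (\chi(y) - \chi(X))\, Q(t, X, \Xi) \bigr) dX d\Xi.
$$
The estimate $|\chi(y) - \chi(X)| \le \Lip(\chi) |y - X|$, combined with a first Cauchy--Schwarz $|\Tr(A Q)| \le \sqrt{\Tr(A^2 Q)} \sqrt{\Tr Q}$ (for $A = A^*$, $Q \ge 0$), a second Cauchy--Schwarz in $(X, \Xi)$ using $\iint \Tr Q = 1$, and the operator inequality $\l^2 |y - X|^2 \le c_\l(X, \Xi, y, \hbar D_y)$, gives
$$
\bigl| \Tr(\chi R(t)) - \iint \chi f(t) \bigr| \le \frac{\Lip(\chi)}{\l} \left( \iint \Tr_\fH(\sqrt{Q}\, c_\l \sqrt{Q}) dX d\Xi \right)^{1/2}.
$$
Taking the infimum over $Q$, invoking Theorem \ref{T-IneqFGTP}, and integrating in $t$ produces
$$
\int_0^T \bigl| \Tr(\chi R(t)) - \iint \chi f(t) \bigr| dt \le \frac{\Lip(\chi)}{\l}\, E_{\hbar, \l}(f^{in}, R^{in})\, \frac{\exp\bigl(\tfrac12 (\l + L^2/\l) T\bigr) - 1}{\tfrac12 (\l + L^2/\l)},
$$
with $L := \Lip(\grad V)$.

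It remains to estimate $E_{\hbar, \l}(f^{in}, R^{in})$. In the T\"oplitz case, Proposition \ref{P-UpBound} applied with $\mu = f^{in}(x, \xi) dx d\xi$ (so the Monge--Kantorovich cost vanishes) gives $E_{\hbar, \l}(f^{in}, R^{in})^2 \le (\l^2 + 1) d\hbar / 2$; substituting and minimizing over $\l > 0$ produces the constant $C(T, L)$, and the explicit choice $\l = L$ yields the stated $(e^{LT} - 1)/(2L)\sqrt{1 + 1/L^2}$. In the pure state case, I fix $\l = 1$ and use the explicit coupling $Q^{in}(x, \xi) := \hus{\psi^{in}}(x, \xi) \cdot |\psi^{in}\rangle\langle\psi^{in}|$, which is readily checked to lie in $\cC(\hus{\psi^{in}}, R^{in})$. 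A direct moment computation, resting on the Husimi identity $\iint (x_j - \la y_j \ra_{\psi^{in}})^2\, \hus{\psi^{in}} dx d\xi = \Delta^2_{y_j}(\psi^{in}) + \hbar/2$ (and its $\xi$-counterpart), yields $\iint \Tr(\sqrt{Q^{in}}\, c_1 \sqrt{Q^{in}}) dx d\xi = 2\Delta(\psi^{in})^2 + d\hbar$; combined with \eqref{deltaplus} this gives $E_{\hbar, 1}(\hus{\psi^{in}}, R^{in}) \le 4 \Delta(\psi^{in})$, as required.

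The main technical point is the moment calculation in the pure state case: one must track how the Weierstrass smoothing $\hus{\cdot} = e^{\hbar \Dlt_{x, \xi}/4} W_\hbar[\cdot]$ converts operator variances of position and momentum into Husimi second moments (producing an extra $\hbar/2$ per coordinate), and then use Heisenberg's inequality \eqref{deltaplus} to absorb the residual $d\hbar$ into $\Delta(\psi^{in})^2$ and obtain a clean bound proportional to $\Delta(\psi^{in})$ alone.
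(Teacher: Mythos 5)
Your proposal is correct and follows the same architecture as the paper's proof: the same splitting of $\int_0^T\Tr(\chi R(t))\,dt$ into a classical transport term plus a coupling-controlled error bounded by $\tfrac{\Lip(\chi)}{\l}E_{\hbar,\l}(f(t),R(t))$, the same appeal to Theorem \ref{T-IneqFGTP} for the growth of the pseudometric, and the same use of Proposition \ref{P-UpBound} with $\mu=f^{in}$ (zero Monge--Kantorovich cost) in the T\"oplitz case, leading to exactly the constant $C(T,L)$. Your Cauchy--Schwarz route to the error bound is a cosmetic variant of the paper's $|x-y|\le\tfrac12(\eps|x-y|^2+\eps^{-1})$ trick and gives the identical estimate.

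The one genuinely different ingredient is the pure-state endpoint bound. The paper simply cites Proposition 9.1 of \cite{GPsemic} for $E_{\hbar,1}(\hus{R^{in}},R^{in})\le 2\Delta(\psi^{in})$, whereas you prove a bound of the same quality from scratch by exhibiting the product coupling $Q^{in}(x,\xi)=\hus{\psi^{in}}(x,\xi)\,|\psi^{in}\ra\la\psi^{in}|$ and computing its cost explicitly. Your moment identity is right: since the Husimi transform preserves first moments and inflates each second moment by $\hbar/2$, one gets $\iint\Tr(\sqrt{Q^{in}}\,c_1\,\sqrt{Q^{in}})\,dxd\xi=2\Delta(\psi^{in})^2+d\hbar\le 3\Delta(\psi^{in})^2$ by \eqref{deltaplus}, hence $E_{\hbar,1}\le\sqrt3\,\Delta(\psi^{in})$, which is even slightly sharper than the cited constant $2$ and amply covers the factor $4$ in the statement. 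What your route buys is self-containedness (no reliance on \cite{GPsemic}); what the paper's citation buys is brevity and, in principle, a bound valid for general mixed states $R^{in}$, though for the theorem as stated only the pure-state case is needed.
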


\smallskip
In fact, one can eliminate all mention of the cutoff function $\chi$ in the final statement, as follows.

\begin{Cor}\label{C-Obs}
Under the same assumptions as in Theorem \ref{T-Obs}, one has
$$
C[T,K,\Om]:=\inf_{(x,\xi)\in K}\int_0^T\indc_{\Om}(X(t;x,\xi))dt>0\,,
$$
and for each $\de>0$, denoting $
\Om_\de:=\{x\in\bR^d\,|\,\mathrm{dist}(x,\Om)<\de\}\,.
$,
$$
\int_0^T\Tr(\indc_{\Om_\de}R(t))dt\ge C[T,K,\Om]-C(T,\Lip(\grad V))\frac{\sqrt{2d\hbar}}{\de}\,
$$
in the T\"oplitz case, and 
$$
\ba
\int_0^T
\int_{\Omega_\delta}|\psi(t,x)|^2dx)dt\ge&
\inf_{(x,\xi)\in K}\int_0^T\indc_\Omega(X(t;x,\xi))dt
\iint_{(x,\xi)\in K}\hus{\psi^{in}}(x,\xi)dxd\xi\nn\\
\\
&-4\frac{\exp\left(\tfrac12\left(1+{\Lip(\grad V)^2}\right)T\right)-1}{\tfrac12\left(1+{\Lip(\grad V)^2}\right)}
\frac{\Delta(\psi^{in})}\delta
\ea
$$
in the pure state case.
\end{Cor}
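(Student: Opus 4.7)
My plan is to deduce the corollary from Theorem \ref{T-Obs} by substituting for the free weight $\chi$ a concrete Lipschitz function sandwiched between $\indc_\Om$ and $\indc_{\Om_\de}$.

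To begin with, I would establish that $C[T,K,\Om]>0$. Condition (GC) furnishes, for each $(x,\xi)\in K$, some $t_0\in(0,T)$ with $X(t_0;x,\xi)\in\Om$. Since $\Om$ is open and $t\mapsto X(t;x,\xi)$ is continuous, the set $\{t\in[0,T]\,:\,X(t;x,\xi)\in\Om\}$ is nonempty and open in $[0,T]$, so $\int_0^T\indc_\Om(X(t;x,\xi))dt>0$ for every $(x,\xi)\in K$. For each fixed $t$, the map $(x,\xi)\mapsto\indc_\Om(X(t;x,\xi))$ is lower semicontinuous because $\Phi_t$ is continuous and $\Om$ is open, and Fatou's lemma then makes $(x,\xi)\mapsto\int_0^T\indc_\Om(X(t;x,\xi))dt$ lower semicontinuous on $\bR^{2d}$. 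A lower semicontinuous function attains its infimum on the compact set $K$, and this infimum is strictly positive by what precedes.

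Next, I would introduce the Lipschitz cutoff
$$
\chi(x):=\max\bigl(0,\,1-\mathrm{dist}(x,\Om)/\de\bigr)\,,
$$
for which $\Lip(\chi)\le 1/\de$ and $\indc_\Om(x)\le\chi(x)\le\indc_{\Om_\de}(x)$ for every $x\in\bR^d$. In particular $\chi>0$ on $\Om$, so Theorem \ref{T-Obs} applies with this choice. In the T\"oplitz case, bounding $\Tr(\chi R(t))\le\Tr(\indc_{\Om_\de}R(t))$ on the left-hand side and $\chi(X(t;x,\xi))\ge\indc_\Om(X(t;x,\xi))$ inside the infimum on the right-hand side, and substituting $\Lip(\chi)=1/\de$, yields the announced inequality. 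The pure-state case is identical: $\int\chi|\psi|^2\le\int_{\Om_\de}|\psi|^2$ on the left and the same lower bound on $\inf_K\int_0^T\chi(X(t;x,\xi))dt$ on the right, with the factor $4\Lip(\chi)$ in Theorem \ref{T-Obs} becoming $4/\de$.

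I do not anticipate any genuine obstacle. The only mildly delicate point is the lower semicontinuity argument underlying positivity of $C[T,K,\Om]$; once that is granted, the conclusion is a one-line application of Theorem \ref{T-Obs} to the distance-based cutoff $\chi$.
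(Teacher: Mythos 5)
Your proposal is correct and follows essentially the same route as the paper: positivity of $C[T,K,\Om]$ via openness of the hitting-time set, lower semicontinuity and Fatou's lemma on the compact $K$, followed by applying Theorem \ref{T-Obs} to the cutoff $\chi(x)=(1-\mathrm{dist}(x,\Om)/\de)_+$ and using $\indc_\Om\le\chi\le\indc_{\Om_\de}$ with $\Lip(\chi)\le 1/\de$. No gaps.
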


\smallskip
The corollary can be used to obtain an observation inequality for T\"oplitz operators as ``test observables" as follows: let $T>0$ be an observation time, let $K\subset\bR^d\times\bR^d$ be a compact subset of the phase-space supporting the initial data, and let $\Om\subset\bR^d$
be the open set where one observes the solution of the Schr\"odinger equation on the time interval $[0,T]$. Assume that $T,K,\Om$ satisfies the geometric condition (GC). With these data, one computes $C[T,K,\Om]>0$. Choose then 
$\hbar,\de>0$ so that
$$
\frac{\hbar}{\de^2}<\frac{C[T,K,\Om]^2}{2dC(T,\Lip(\grad V))^2}\,.
$$
Then the Heisenberg equation \eqref{Heisen} satisfies the observability property on $[0,T]\times\Om_\de$ for all T\"oplitz initial density operators whose symbol is supported in $K$.

\begin{proof}[Proof of the corollary]
Since $\Om$ is open, the function $\indc_\Om$ is lower semicontinuous. According to condition (GC), for each $(x,\xi)\in K$, there exists $t_{x,\xi}\in(0,T)$ such that $\indc_{\Om}(X(t_{x,\xi};x,\xi))=1$. Since the set
$$
\{t\in(0,T)\,|\,\indc_{\Om}(X(t;x,\xi))>1/2\}
$$ 
is open, there exists $\eta_{x,\xi}>0$ such that 
$$
[t_{x,\xi}-\eta_{x,\xi},[t_{x,\xi}+\eta_{x,\xi}]\subset(0,T)
$$
and then
$$
\int_0^T\indc_{\Om}(X(t;x,\xi))dt\ge 2\eta_{x,\xi}>0\,,\quad\hbox{ for each }(x,\xi)\in K\,.
$$
By Fatou's lemma, the function
$$
(x,\xi)\mapsto\int_0^T\indc_{\Om}(X(t;x,\xi))dt
$$
is lower semicontinuous, and positive on $K$. Hence
$$
C[T,K,\Om]:=\inf_{(x,\xi)\in K}\int_0^T\indc_{\Om}(X(t;x,\xi))dt>0\,.
$$

Apply Theorem \ref{T-Obs} with $\chi$ defined as follows:
$$
\chi_\de(x)=\left(1-\frac{\hbox{dist}(x,\Om)}{\de}\right)_+\,,\quad\hbox{ in which case }\Lip(\chi)=\frac1\de\,.
$$
One concludes by observing that
$$
\int_0^T\Tr(\chi_\de R(t))dt\int_0^T\Tr(\indc_{\Om_\de}R(t))dt\,,
$$
whereas
$$
\int_0^T\indc_{\Om}(X(t;x,\xi))dt\le\int_0^T\chi_\de(X(t;x,\xi))dt\,.
$$
\end{proof}

\begin{proof}
Notice that
$$
\ba
\Tr(\chi(R(t))-\iint_{\bR^d\times\bR^d}\chi(x)f(t,x,\xi)dxd\xi
\\
=
\iint_{\bR^d\times\bR^d}\Tr_\fH((\chi(y)-\chi(x))Q(t,x,\xi))dxd\xi
\ea
$$
for each $Q\equiv Q(t,x,\xi)\in\cC(f(t),R(t))$. Hence
$$
\ba
\left|\Tr(\chi R(t))-\iint_{\bR^d\times\bR^d}\chi(x)f(t,x,\xi)dxd\xi\right|
\\
=\left|\iint_{\bR^d\times\bR^d}\Tr_\fH((\chi(y)-\chi(x))Q(t,x,\xi))dxd\xi\right|
\\
\le\iint_{\bR^d\times\bR^d}|\Tr_\fH((\chi(y)-\chi(x))Q(t,x,\xi))|dxd\xi
\\
=\iint_{\bR^d\times\bR^d}|\Tr_\fH(Q(t,x,\xi)^{1/2}(\chi(y)-\chi(x))Q(t,x,\xi)^{1/2})|dxd\xi
\\
\le\iint_{\bR^d\times\bR^d}\Tr_\fH(Q(t,x,\xi)^{1/2}|\chi(y)-\chi(x)|Q(t,x,\xi)^{1/2})dxd\xi
\\
\le\Lip(\chi)\iint_{\bR^d\times\bR^d}\Tr_\fH(Q(t,x,\xi)^{1/2}|x-y|Q(t,x,\xi)^{1/2})dxd\xi
\\
\le\Lip(\chi)\iint_{\bR^d\times\bR^d}\Tr_\fH\left(Q(t,x,\xi)^{1/2}\tfrac12\left(\eps|x-y|^2+\frac1\eps\right)Q(t,x,\xi)^{1/2}\right)|dxd\xi&\,.
\ea
$$
Minimizing in $\eps>0$ shows that
$$
\ba
\left|\Tr_\fH(\chi R(t))-\iint_{\bR^d\times\bR^d}\chi(x)f(t,x,\xi)dxd\xi\right|
\\
\le
\Lip(\chi)\left(\iint_{\bR^d\times\bR^d}\Tr_\fH\left(Q(t,x,\xi)^{1/2}|x-y|^2Q(t,x,\xi)^{1/2}\right)|dxd\xi\right)^{1/2}
\\
\le\frac{\Lip(\chi)}{\l}\left(\iint_{\bR^d\times\bR^d}\Tr_\fH\left(Q(t,x,\xi)^{1/2}c_\l(x,\xi,y,\hbar D_y)Q(t,x,\xi)^{1/2}\right)|dxd\xi\right)^{1/2}&\,.
\ea
$$
This holds for each $Q(t)\in\cC(f(t),R(t))$; minimizing in $Q(t)\in\cC(f(t),R(t))$ leads to the bound
$$
\left|\Tr_\fH(\chi R(t))-\iint_{\bR^d\times\bR^d}\chi(x)f(t,x,\xi)dxd\xi\right|\le\frac{\Lip(\chi)}{\l}E_{\hbar,\l}(f(t),R(t))\,.
$$
By Theorem \ref{T-IneqFGTP}
$$
\ba
\left|\Tr_\fH(\chi R(t))-\iint_{\bR^d\times\bR^d}\chi(x)f(t,x,\xi)dxd\xi\right|
\\
\le\frac{\Lip(\chi)}{\l}E_{\hbar,\l}(f^{in},R^{in})\exp\left(\tfrac12\left(\l+\frac{\Lip(\grad V)^2}\l\right)t\right)&\,.
\ea
$$

On the other hand
$$
\ba
\iint_{\bR^d\times\bR^d}\chi(x)f(t,x,\xi)dxd\xi=&\iint_{\bR^d\times\bR^d}\chi(x)f^{in}(X(t;x,\xi),\Xi(t;x,\xi))dxd\xi
\\
=&\iint_{\bR^d\times\bR^d}\chi(X(t;x,\xi))f^{in}(x,\xi)dxd\xi\,.
\ea
$$

Hence

\begin{eqnarray}
\int_0^T\Tr(\chi R(t))dt&\ge&\iint_{\bR^d\times\bR^d}\left(\int_0^T\chi(X_t(x,\xi))dt\right)f^{in}(x,\xi)dxd\xi\nn
\\
&&-\frac{\Lip(\chi)}{\l}E_{\hbar,\l}(f^{in},R^{in})\int_0^T\exp\left(\tfrac12\left(\l+\frac{\Lip(\grad V)^2}\l\right)t\right)dt\nn
\\
&\ge&\iint_{\bR^d\times\bR^d}\left(\int_0^T\chi(X_t(x,\xi))dt\right)f^{in}(x,\xi)dxd\xi\nn
\\
&&-\frac{\Lip(\chi)}{\l}\frac{\exp\left(\tfrac12\left(\l+\frac{\Lip(\grad V)^2}\l\right)T\right)-1}{\tfrac12\left(\l+\frac{\Lip(\grad V)^2}\l\right)}E_{\hbar,\l}(f^{in},R^{in})\,.\nn\\
&\geq&
\inf_{(x,\xi)\in K}\int_0^T\chi(X(t;x,\xi))dt
\iint_{(x,\xi)\in K}f^{in}(x,\xi)dxd\xi\label{eqfin}\\
&&-\frac{\Lip(\chi)}{\l}\frac{\exp\left(\tfrac12\left(\l+\frac{\Lip(\grad V)^2}\l\right)T\right)-1}{\tfrac12\left(\l+\frac{\Lip(\grad V)^2}\l\right)}
E_{\hbar,\l}(f^{in},R^{in})\,.\nn
\end{eqnarray}

In particular, putting $f^{in}=\hus{R^{in}}$ and $\lambda=1$, one obtains
\begin{eqnarray}
\int_0^T\Tr(\chi R(t))dt&\ge&
\inf_{(x,\xi)\in K}\int_0^T\chi(X(t;x,\xi))dt
\iint_{(x,\xi)\in K}(\hus{R^{in}}(x,\xi)dxd\xi\nn\\
&&-\frac{\Lip(\chi)}{\l}\frac{\exp\left(\tfrac12\left(\l+\frac{\Lip(\grad V)^2}\l\right)T\right)-1}{\tfrac12\left(\l+\frac{\Lip(\grad V)^2}\l\right)}
E_{\hbar,\l}((\hus{R^{in}},R^{in})\,.\label{obsgen}
\end{eqnarray}

For $R^{in}=|\psi^{in}\rangle\langle\psi^{in}|$, we know by Proposition 9.1. in \cite{GPsemic} that $
E_{\hbar,1}(\hus{R^{in}},R^{in})\leq 2\Delta(R^{in})$ and
we get the conclusion of Theorem \ref{T-Obs} in the pure state case.

\vskip 1cm
If  $f^{in}$ is any compactly supported probability density, the inequality \eqref{eqfin} that
$$
\ba
\int_0^T\Tr(\chi R(t))dt\ge&\inf_{(x,\xi)\in\Supp(f^{in})}\int_0^T\chi(X(t;x,\xi))dt
\\
&-\frac{\Lip(\chi)}{\l}\frac{\exp\left(\tfrac12\left(\l+\frac{\Lip(\grad V)^2}\l\right)T\right)-1}{\tfrac12\left(\l+\frac{\Lip(\grad V)^2}\l\right)}E_{\hbar,\l}(f^{in},R^{in})\,.
\ea
$$

Now, if $R^{in}$ is the T\"oplitz operator with symbol $(2\pi\hbar)^d\mu^{in}$, where $\mu^{in}$ is a Borel probability measure on $\bR^d\times\bR^d$, 
$$
\ba
\int_0^T\Tr(\chi R(t))dt\ge\inf_{(x,\xi)\in\Supp(f^{in})}\int_0^T\chi(X(t;x,\xi))dt
\\
-\frac{\Lip(\chi)}{\l}\frac{\exp\left(\tfrac12\left(\l\!+\!\frac{\Lip(\grad V)^2}\l\right)T\right)\!-\!1}{\tfrac12\left(\l\!+\!\frac{\Lip(\grad V)^2}\l\right)}\sqrt{\max(1,\l^2)\MKd(f^{in}\!,\mu^{in})^2\!+\!\tfrac12(\l^2\!+\!1)d\hbar}&\,.
\ea
$$

In particular, if $R^{in}=\OP^T_{\hbar}[(2\pi\hbar)^df^{in}]$, one has
$$
\ba
\int_0^T\Tr(\chi R(t))dt\ge&\inf_{(x,\xi)\in\Supp(f^{in})}\int_0^T\chi(X(t;x,\xi))dt
\\
&-\frac{\Lip(\chi)}{\l}\frac{\exp\left(\tfrac12\left(\l\!+\!\frac{\Lip(\grad V)^2}\l\right)T\right)\!-\!1}{\tfrac12\left(\l\!+\!\frac{\Lip(\grad V)^2}\l\right)}\sqrt{\tfrac12(\l^2\!+\!1)d\hbar}\,.
\ea
$$
Maximizing the right hand side as $\l$ runs through $(0,+\infty)$, one finds that
$$
\ba
\int_0^T\Tr(\chi R(t))dt\ge&\inf_{(x,\xi)\in\Supp(f^{in})}\int_0^T\chi(X(t;x,\xi))dt
\\
&-\Lip(\chi)C(T,\Lip(\grad V))\sqrt{2d\hbar}\,,
\ea
$$
where
$$
C(T,L):=\inf_{\l>0}\frac{\exp\left(\tfrac12\left(\l\!+\!\frac{L^2}\l\right)T\right)\!-\!1}{\l^2+L^2}\sqrt{\l^2\!+\!1}\,.
$$
If $L>0$, one can take $\l=L$ so that
$$
C(T,L)\le\frac{e^{LT}-1}{2L^2}\sqrt{1+L^2}\,.
$$
\end{proof}

\smallskip
Notice that, in the case where $L=0$, one can choose $\l=2r/T$ with
$$
re^r=2(e^r-1)\,,\quad r>0\,,\qquad\l=2r/T\,,
$$
and find that
$$
C(T,0)\le\frac{e^r-1}{4r^2}T^2\sqrt{1+\frac{4r^2}{T^2}}\,.
$$
\vskip 1cm
\textbf{Acknowledgments.} We would like to thank warmly Claude Bardos for having read  the first version of this paper and mentioned several references. 
\vskip 1cm


\begin{thebibliography}{99}

\bibitem{AnanLeauMac}
N. Anantharaman, M. L\'eautaud, F. Maci\`a:
\textit{Wigner measures and observability for the Schr\"odinger equation on the disk},
Invent. Math. \textbf{206} (2016), 485--599.

\bibitem{BLR}
C. Bardos, G. Lebeau, J. Rauch:
\textit{Sharp sufficient conditions for the observation, control and stabilization of waves from the boundary},
SIAM J. Control Opti. \textbf{30} (1992), 1024--1065.


\bibitem{fabre1}
C. Fabre:
\textit{Quelques r\'esultats de contr\^olabilit\'e exacte de l'\'equation de Schr\"odinger. Application \`a l'\'equation des plaques vibrantes.
(French) [Exact controllability of the Schr\"odinger equation. Application to the vibrating-plate equation]}
C. R. Acad. Sci. Paris S\'er. I Math. \textbf{312} (1991), 61--66.

\bibitem{fabre2} C. Fabre:  \textit{ R\'esultats de contr\^olabilit\'e exacte interne pour l\'equation de Schr\^odinger et leurs lirnites asymptoliques},  Asymptotic Analysis \textbf{5} (1992), 343--379.

\bibitem{FGMouPaul}
F. Golse, C. Mouhot, T. Paul:
\textit{On the Mean Field and Classical Limits of Quantum Mechanics},
Commun. Math. Phys. \textbf{343} (2016), 165--205.


\bibitem{FGTPaul}
F. Golse, T. Paul:
\textit{The Schr\"odinger Equation in the Mean-Field and Semiclassical Regime},
Arch. Rational Mech. Anal. \textbf{223} (2017), 57--94.

\bibitem{GPsemic}
F. Golse,  T. Paul:
\textit{Semiclassical evolution with low regularity}, preprint hal-02619489 and 
arXiv:2011.14884, to appear in J. Math. Pures et Appl..

\bibitem{Kato}
T. Kato:
``Perturbation Theory for Linear Operators'',
Springer Verlag, Berlin, Heidelberg, 1966, 1976.

\bibitem{Laurent} C. Laurent: \textit{Internal control of the Schr\"odinger equation}, Math. Control Relat. Fields \textbf{4}
(2014), 161--186.

\bibitem{lebeau1} G. Lebeau, \textit{ Contr\^ole de l'\'equation de Schr\"odinger. (French) [Control of the Schr\" odinger equation]} J. Math. Pures Appl. (9) \textbf{71} (1992), 267--291


\bibitem{RudinRCA}
W. Rudin:
``Real and Complex Analysis'', 
Mc Graw Hill, Singapore, 1986.


\end{thebibliography}
\end{document}